\theoremstyle{plain}
\newtheorem{theorem}{Theorem}[section]
\newtheorem{lemma}[theorem]{Lemma}
\newtheorem{conjecture}[theorem]{Conjecture}
\newtheorem{corollary}[theorem]{Corollary}
\title{An optimal construction for complete graph embeddings with duals of low connectivity}
\author{Timothy Sun\\Department of Computer Science\\San Francisco State University}
\date{}
\newcommand{\Z}{\mathbb{Z}}
\newcommand{\RL}[1]{\emph{\textbf{\underline{#1}}}}
\newcommand{\SM}{\small}
\begin{document}

\maketitle

\begin{abstract}
We describe a construction for embeddings of complete graphs where the dual has a cutvertex and the genus is close to the minimum genus of the primal graph. When the number of vertices is congruent to 5 modulo 12, we further guarantee that the dual is simple and that the genera of the resulting embeddings match a lower bound of Brinkmann, Noguchi, and Van den Camp, showing that their lower bound is tight infinitely often. 
\end{abstract}

\section{Introduction}

In a recent pair of papers, Bokal, Brinkmann, Noguchi, Van den Camp, and Zamfirescu~\cite{BBZ, BNV} studied the relationships between the connectivities of the primal and dual graphs of embeddings in surfaces. In particular, given two integers $c$ and $k \in \{1,2\}$, they were interested in a parameter they denoted as $\delta_k(c)$, the smallest integer $\delta$ where there is a cellular embedding of a $c$-connected graph in the orientable surface of genus $\delta$ whose dual is simple and has a $k$-vertex-cut. For sufficiently large values of $c$, bounds on $\delta_k(c)$ are related to embeddings of the complete graph $K_{c+1}$, whose minimum genus is given by the formula
$$\gamma(K_{c+1}) = \left\lceil \frac{(c-2)(c-3)}{12}\right\rceil.$$
Brinkmann, Noguchi, and Van den Camp~\cite{BNV} proved the lower bounds $\delta_1(c) \geq \gamma(K_{c+1})+2$ and $\delta_2(c) \geq \gamma(K_{c+1})+1$. The latter bound matches a construction in Bokal, Brinkmann, and Zamfirescu~\cite{BBZ}, so we focus on the former bound. 

For brevity, we call an embedding of a $c$-connected graph with a simple dual of connectivity 1 a \emph{dual-separable} embedding, and if it has genus $\gamma(K_{c+1})+2$, we say that it is \emph{optimal}. Brinkmann et al.~\cite{BNV} showed that for some values of $c$, one can construct dual-separable embeddings that are nearly optimal. In particular, given a triangular embedding of $K_{c+1}-E(K_6)$, it can be modified into a dual-separable embedding of $K_{c+1}$ in the surface of genus $\gamma(K_{c+1})+3$. They then applied their construction to the triangular embeddings of $K_{12s+9}-E(K_6)$, $s \geq 2$, constructed in Sun~\cite{Sun-Minimum}. We note that such embeddings of $K_n-E(K_6)$ are also known for $n = 12s+10$, $s \geq 2$~\cite{Sun-Kainen}, and an embedding for $n = 18$ was announced by Jungerman~\cite{Jungerman-K18}.

The purpose of this note is to show that a known family of triangular embeddings of $K_{12s+5}-E(K_2)$ can be modified into optimal dual-separable embeddings:

\begin{theorem}
For $s \geq 1$, $\delta_1(12s+4) = \gamma(K_{12s+5})+2.$
\label{thm-main}
\end{theorem}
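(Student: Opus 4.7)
The lower bound $\delta_1(12s+4) \geq \gamma(K_{12s+5}) + 2$ is the Brinkmann--Noguchi--Van den Camp bound quoted in the introduction, so the task is to construct, for each $s \geq 1$, an embedding of $K_{12s+5}$ of genus exactly $\gamma(K_{12s+5}) + 2 = 12s^2 + 3s + 3$ whose dual is simple and has a cut vertex. My plan is to modify the ``known family'' of triangular embeddings alluded to in the introduction. A triangular embedding $\Pi$ of $K_{12s+5} - E(K_2)$ lives on the surface of genus $12s^2 + 3s$ by a short Euler-characteristic computation, so its genus falls short of the target by exactly $3$. Let $u,v$ denote the unique pair of non-adjacent vertices; since no face of $\Pi$ contains both, realizing the edge $uv$ demands at least one additional handle. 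The dual of $\Pi$ is automatically simple, because in a triangular embedding of a simple graph any two faces are triangles on three distinct vertices and can share at most one edge.

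The construction I propose is a local surgery near $u$ and $v$: excise from $\Pi$ a disk region bounded by a simple cycle $C$ that passes through both $u$ and $v$, assembled from a small union of triangles of $\Pi$ incident to $u$ or $v$, and glue in a patch $P$ which is a bordered surface of genus $3$ with a single boundary component identified with $C$. Inside $P$ we place the missing edge $uv$, triangulate the rest, and leave exactly one distinguished face $F^*$. After gluing, $\partial F^*$ is a separating cycle on the full surface, and $F^*$ becomes the cut vertex in the dual: removing its dual vertex separates the faces of $P$ from the outer triangles of $\Pi$. An Euler-characteristic tally---one new edge, three new handles, and one distinguished face in place of the excised triangles---then confirms a total genus of $12s^2 + 3s + 3$ and a primal graph of $K_{12s+5}$.

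The main obstacle is verifying that the resulting dual is \emph{simple}. The chief constraints are that $\partial F^*$ be a simple cycle in $K_{12s+5}$; that no three consecutive vertices along $\partial F^*$ span a triangular face of the embedding (otherwise that triangle would share two edges with $F^*$, creating a multi-edge in the dual); and that the triangles inside $P$ pairwise share at most one edge. The three handles of $P$ furnish exactly the topological slack needed to route $\partial F^*$ around these obstructions while still closing off the edge $uv$. I expect the core of the proof to be an explicit description of $P$ as a rotation system on a constant number of vertices, together with a short case analysis verifying these simplicity conditions uniformly in $s$.
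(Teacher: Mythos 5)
Your overall skeleton matches the paper's: invoke the Brinkmann--Noguchi--Van den Camp lower bound, start from a triangular embedding of $K_{12s+5}-E(K_2)$ (which indeed sits at genus $\gamma(K_{12s+5})-1$), and perform a local surgery that raises the genus by $3$, inserts the missing edge, and manufactures one long face acting as a cutvertex of the dual. But the proposal stops exactly where the proof has to begin: the patch $P$, the cycle $C$, and the simplicity verification are all deferred (``I expect the core of the proof to be an explicit description of $P$\dots''), and none of them is routine. The surgery cannot be performed at an arbitrary location in an arbitrary triangular embedding of $K_{12s+5}-E(K_2)$: the triangles inside the patch must be bounded by edges of $K_{12s+5}$ among the handful of boundary vertices that are not already used twice elsewhere, and Lemma \ref{lem-euler} forces the total face excess to be at most $17$ while the cutface alone must have length at least $15$, so there is almost no slack. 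The paper resolves this by choosing a \emph{specific} family of embeddings---derived from a rung-swapped modification of Ringel's index-3 current graphs---that contains a Jungerman--Ringel \emph{subtractible handle} sitting next to the two nonadjacent vertices, and the surgery is an explicit sequence of rotation-system moves (delete the handle's six edges, which drops the genus by one while staying triangular; ``reverse'' the two exposed triangles; re-insert the six edges with the opposite orientation; then add one handle and three more edges, one of which is the missing edge $(x,y)$), after which the $18$-gon cutface and a single quadrilateral face give face excess exactly $16$. Asserting that ``the three handles furnish exactly the topological slack needed'' does not establish that such a patch exists for every $s$.

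Two further concrete omissions: (i) the dual-simplicity check is genuinely delicate---the paper must verify from the explicit local rotations that the $18$-gon meets each neighboring triangle and the quadrilateral exactly once---and your conditions, while correctly identified, are never discharged; (ii) the case $s=1$ is not covered by the current-graph family (which requires $s \geq 2$), and the paper handles $K_{17}$ by a separate computer-found embedding in the appendix; your argument makes no provision for it. As written, this is a plausible plan rather than a proof.
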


Our construction revolves around what Jungerman and Ringel~\cite{JungermanRingel-Minimal} called a ``subtractible handle,'' a small set of edges whose deletion decreases the genus by 1. We also sketch how the same construction can be applied to other complete graphs to obtain another nearly optimal result. Finally, some small optimal dual-separable embeddings are given in Appendix \ref{app-small}.  

\section{Background}

We assume familiarity with topological graph theory, especially the theory of current graphs. For more information, see Ringel~\cite{Ringel-MapColor} (in particular, Section 9) and Gross and Tucker~\cite{GrossTucker}. 

Let $G = (V,E)$ be a graph, and let $\phi\colon G \to S$ be an embedding of a graph $G$ in an orientable surface $S_g$ of genus $g$. In this paper, we restrict ourselves to embeddings that are cellular, i.e., ones where the connected components of $S \setminus \phi(G)$ are disks. The connected components are called \emph{faces} and are denoted by the set $F$. The number of faces is governed by Euler's polyhedral formula, which states that
$$|V|-|E|+|F|=2-2g.$$

Each edge $e$ induces two arcs $e^+$ and $e^-$ pointing in opposite directions. We use $E^+$ to denote the set of such arcs. A \emph{rotation} at a vertex is a cyclic permutation of the arcs leaving that vertex, and a \emph{rotation system} is an assignment of a rotation to each vertex of the graph. For a simple graph, a rotation can be specified by simply a cyclic permutation of the vertex's neighbors. The Heffter-Edmonds principle states that, up to equivalence, rotation systems are in one-to-one correspondence with cellular embeddings in orientable surfaces. To determine the embedded surface from the rotation system, one can trace out its faces and solve for the genus in Euler's polyhedral formula.

For a simple graph on at least three vertices, every face has length at least 3, so the inequality $2|E| \geq 3|F|$ yields the bound
$$|E| \leq 3|V|-6+6g,$$
with equality if and only if the embedding is triangular. Let $\gamma(G)$ denote the \emph{minimum genus} of $G$, i.e., the minimum value $\gamma$ such that $G$ has an embedding in the orientable surface of genus $\gamma$. The upper bound on the number of edges implies a lower bound on the minimum genus of simple graphs, which, for the complete graphs $K_n$, reads
$$\gamma(K_n) \geq \left \lceil \frac{(n-3)(n-4)}{12} \right\rceil.$$
The Map Color Theorem~\cite{Ringel-MapColor} of Ringel, Youngs, and others exhibits embeddings matching this lower bound for all $n \geq 3$. In this work, we deal with embeddings whose genus is close to, but not exactly equal to this bound. To this end, we define two parameters that measure the distance away from this bound. 

If $f_1, f_2, \dotsc f_{|F|}$ are the lengths of the faces of the embedding, then we define the \emph{face excess} to be
$$f^+ = \sum_{i=1}^{|F|} (f_i-3),$$
which roughly describes how far the embedding is from triangular. We also define the \emph{vertex excess} of a $c$-connected graph to be $|V|-(c+1)$, i.e., how many more vertices it has than the complete graph of the  connectivity $c$. A standard calculation using Euler's polyhedral formula yields:

\begin{lemma}[Lemma 2, Brinkmann et al.~\cite{BNV}]
Suppose we have an embedding of a graph with minimum degree $c \geq 6$ and vertex and face excesses $v^+$ and $f^+$. Then the genus of this embedding is at least
$$\left\lceil \frac{(c-2)(c-3)}{12} + \frac{(c-6)v^+}{12} + \frac{f^+}{6} \right\rceil \geq \gamma(K_{c+1}) + \left\lfloor \frac{(c-6)v^+}{12}+\frac{f^+}{6}\right\rfloor.$$
\label{lem-euler}
\end{lemma}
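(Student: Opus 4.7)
The plan is to combine Euler's polyhedral formula with the handshake lemma and the definition of face excess, and then read off the two inequalities. First I would use the identity $f^+ = \sum_i (f_i - 3) = 2|E| - 3|F|$ to eliminate $|F|$ from $|V| - |E| + |F| = 2 - 2g$, which rearranges to
$$g = \frac{|E|}{6} - \frac{|V|}{2} + 1 + \frac{f^+}{6}.$$
Next I would substitute the handshake bound $|E| \geq c|V|/2$ coming from the minimum-degree hypothesis, together with $|V| = (c+1) + v^+$. A short algebraic simplification, using $(c+1)(c-6) = (c-2)(c-3) - 12$ to absorb the constant $+1$, collapses the resulting expression into
$$g \geq \frac{(c-2)(c-3)}{12} + \frac{(c-6)v^+}{12} + \frac{f^+}{6}.$$
Since $g$ is an integer, taking the ceiling on the right gives the first inequality of the lemma.

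For the second inequality, I would invoke the standard floor/ceiling identity $\lceil x + y \rceil \geq \lceil x \rceil + \lfloor y \rfloor$, which is immediate from $y \geq \lfloor y \rfloor$ and the monotonicity of $\lceil \cdot \rceil$. Applying it with $x = (c-2)(c-3)/12$ and $y = (c-6)v^+/12 + f^+/6$, and recalling $\gamma(K_{c+1}) = \lceil (c-2)(c-3)/12 \rceil$ from the formula given in the excerpt, yields the stated lower bound on the right.

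The whole argument is essentially bookkeeping, so I do not anticipate a genuine obstacle. The one place to be careful is the hypothesis $c \geq 6$: this is exactly what ensures that the coefficient $(c-6)/12$ of $|V|$ in the intermediate expression is nonnegative, so that substituting the handshake \emph{lower} bound $|E| \geq c|V|/2$ preserves the direction of the inequality after $|V|$ is replaced by $c+1+v^+$. Without $c \geq 6$, rewriting in terms of the vertex excess would go the wrong way and the clean separation of the $(c-2)(c-3)/12$ term would not produce a useful bound.
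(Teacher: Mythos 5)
Your calculation is correct, and it is precisely the ``standard calculation using Euler's polyhedral formula'' that the paper invokes without writing out (eliminate $|F|$ via $f^+=2|E|-3|F|$, apply the handshake bound, and use $(c+1)(c-6)=(c-2)(c-3)-12$). One small quibble with your closing caveat: $c\geq 6$ is not actually needed to preserve any inequality direction --- the coefficient of $|E|$ is positive and the substitution $|V|=c+1+v^+$ is an identity --- it only ensures the term $\frac{(c-6)v^+}{12}$ is a nonnegative contribution, so that the stated bound is an improvement rather than a weakening.
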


Previously, this type of calculation was used by Plummer and Zha~\cite{PlummerZha} in determining what they called ``$g$-unique'' graphs:

\begin{corollary}[Theorem 2.4(a), Plummer and Zha~\cite{PlummerZha}]
For $c \geq 18$, no other $c$-connected graphs besides the complete graphs can have genus $\gamma(K_{c+1})$. 
\end{corollary}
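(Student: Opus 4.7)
The plan is to argue by contradiction: suppose $G$ is a $c$-connected graph with $c \geq 18$, $\gamma(G) = \gamma(K_{c+1})$, and $G \neq K_{c+1}$, then derive a contradiction from Lemma \ref{lem-euler} applied to an optimum embedding of $G$.

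First I would set up the hypotheses of Lemma \ref{lem-euler}. Since $G$ is $c$-connected, $|V(G)| \geq c+1$ and the minimum degree of $G$ is at least $c \geq 18 \geq 6$, so the lemma is applicable. Fix a genus embedding $\phi$ of $G$ with vertex excess $v^+ = |V(G)|-(c+1) \geq 0$ and face excess $f^+ \geq 0$. Lemma \ref{lem-euler} then yields
$$\gamma(K_{c+1}) \;=\; \gamma(G) \;\geq\; \gamma(K_{c+1}) + \left\lfloor \frac{(c-6)v^+}{12} + \frac{f^+}{6}\right\rfloor,$$
so the floor on the right must equal $0$.

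Next I would split on $v^+$. The key arithmetic observation is that $c \geq 18$ forces $c-6 \geq 12$, hence $(c-6)v^+/12 \geq v^+$. Thus if $v^+ \geq 1$, the floor expression is already at least $1$, contradicting the inequality above. Therefore $v^+ = 0$, so $|V(G)| = c+1$. But a $c$-connected graph on $c+1$ vertices has every vertex of degree at least $c$, i.e., adjacent to every other vertex, so $G = K_{c+1}$, contrary to assumption.

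There is essentially no hard step: the whole argument is a two-line application of Lemma \ref{lem-euler} combined with the trivial observation that a $c$-connected graph on $c+1$ vertices is necessarily $K_{c+1}$. The only thing to be careful about is the threshold $c \geq 18$, which is exactly what is needed to make the coefficient $(c-6)/12$ of $v^+$ at least $1$; this is why the statement cannot be extended uniformly to smaller values of $c$ by the same calculation.
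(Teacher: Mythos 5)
Your proof is correct and is exactly the calculation the paper has in mind: the paper states this corollary without an explicit proof, merely noting that it follows from the same Euler-formula bound as Lemma \ref{lem-euler}, and your argument (floor must be $0$, $c\geq 18$ makes the coefficient of $v^+$ at least $1$, so $v^+=0$ and a $c$-connected graph on $c+1$ vertices is complete) fills in precisely that reasoning. The only implicit point worth a word is that a minimum-genus embedding of a connected graph is automatically cellular, so the lemma indeed applies.
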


In conjunction with the main result of Brinkmann et al.~\cite{BNV}, this implies a lower bound on the genus of any dual-separable embedding of a graph of sufficiently high connectivity. We call a cutvertex in the dual graph a \emph{cutface}. 

\begin{theorem}[Theorem 2, Brinkmann et al.~\cite{BNV}]
Suppose we have a dual-separable embedding of a graph of vertex connectivity $c \geq 8$. Then the embedding's cutface is of length at least 15.
\end{theorem}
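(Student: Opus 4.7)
The plan is to prove the contrapositive: suppose for contradiction that $F^*$ has length $\ell \leq 14$. Leveraging the structure of the dual cutvertex $F^*$ --- simplicity of the dual forbids any edge with $F^*$ on both sides, and the other faces split into nonempty classes $F_1, F_2$ sharing no edges --- I would partition the edges as $E(G) = E_1 \sqcup E_2 \sqcup E_*^1 \sqcup E_*^2$, where $E_i$ are edges with both sides in a face of $F_i$ and $E_*^i$ are boundary edges of $F^*$ with an $F_i$-face on the other side. Define subgraphs $G_i = (V(E_i \cup E_*^i),\, E_i \cup E_*^i)$ and let $V_S = V(G_1) \cap V(G_2)$.

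The main combinatorial step is a rotation argument at each $v \in V_S$: an $F_1$-face corner adjacent in the rotation to an $F_2$-face corner would force an $F_1$--$F_2$ edge in $G$, violating the cutvertex property. Consecutive $F_1$- and $F_2$-blocks in the rotation at $v$ must therefore be separated by $F^*$-corners, and since $v$ has blocks of both types it receives at least two $F^*$-corners --- i.e., the boundary walk of $F^*$ visits $v$ at least twice. Summing over $V_S$ gives $\ell \geq 2|V_S|$. In the nondegenerate case where $V(G_1)\setminus V_S$ and $V(G_2)\setminus V_S$ are both nonempty, the edge decomposition shows that no edge of $G$ joins them, making $V_S$ a vertex cut of $G$; then $|V_S| \geq c \geq 8$, so $\ell \geq 2c \geq 16$, a contradiction.

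The main obstacle is the degenerate case, say $V(G_1) \subseteq V(G_2)$, where $V_S = V(G_1)$ need not be a separator and can be much smaller than $c$. I would handle it by examining the subregion $R_1 \subseteq S$ tiled by the $F_1$-faces: topologically a (possibly pinched) disk whose boundary walk is the $F_1$-arc of $\partial F^*$ of some length $\ell_1$, with every vertex lying in $V_S$ on that boundary. Euler's formula on $R_1$ combined with simplicity of $G$ (every face length at least $3$) yields $|F_1| \leq \ell_1 - 2$ and similar constraints on the $F_2$-side, while the global degree condition $\delta(G) \geq c$ forces each $v \in V_S$ to send many edges into $G_2$. Pushing this bookkeeping through to the sharp constant $\ell \geq 15$ --- which matches the author's later tight construction at $\ell = 15$ --- is the technical heart of the argument, and the step I expect to be most delicate; as an alternative, one might add a chord across $F^*$ between two suitable pinch vertices and apply the lower bound $\delta_1(c) \geq \gamma(K_{c+1})+2$ to the resulting embedding, though this route also hinges on controlling the modified graph's connectivity and simplicity.
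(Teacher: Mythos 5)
This theorem is quoted from Brinkmann, Noguchi, and Van den Camp \cite{BNV}; the paper under review does not prove it, so your attempt can only be judged on its own terms. Your setup is sound as far as it goes: the edge partition, the rotation argument showing that every vertex of $V_S$ carries at least two corners of $F^*$ (hence $\ell \geq 2|V_S|$), and the disposal of the non-degenerate case via $|V_S| \geq c \geq 8$, giving $\ell \geq 16$, are all correct.

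The genuine gap is that the degenerate case is not an edge case to be ``handled'' --- it is the entire theorem. In every tight example in this paper (e.g., the $K_{13}$ and $K_{16}$ embeddings with cutfaces of length exactly $15$), the side $F_1$ consists of six faces on the six vertices $0,\dotsc,5$, all of which also lie in $G_2$; so $V(G_1) \subseteq V(G_2)$, $V_S$ is not a separator, and your non-degenerate bound $\ell \geq 2c$ is simply false there. The constant $15$ must therefore be extracted from exactly the bookkeeping you defer: bounding $|F_1|$ via the Euler characteristic of $R_1$, using $\deg_G(v) \geq c$ at each $v \in V_S$, and combining with $\ell = |E^1_*| + |E^2_*|$. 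You state the target inequalities but derive none of them, and you yourself flag this as ``the step I expect to be most delicate.'' A proof sketch that stops before the only step that produces the claimed constant is not a proof. Moreover, your proposed fallback --- adding a chord across $F^*$ and invoking $\delta_1(c) \geq \gamma(K_{c+1})+2$ --- is circular: in both \cite{BNV} and this paper, that lower bound is deduced \emph{from} the present theorem (it is exactly Corollary~3, proved by substituting the face excess $\geq 12$ coming from a cutface of length $\geq 15$ into Lemma~\ref{lem-euler}). You would need an independent derivation of that genus bound to use it here, and none is offered.
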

\begin{corollary}[Corollary 3, Brinkmann et al.~\cite{BNV}]
For $c \geq 8$, $\delta_1(c) \geq \gamma(K_{c+1})+2$.
\end{corollary}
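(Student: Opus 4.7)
My plan is to combine Theorem 2 (the cutface has length at least 15) with Lemma \ref{lem-euler} (the excess-based lower bound on genus). Given any dual-separable embedding of a $c$-connected graph $G$ with $c \geq 8$, Theorem 2 tells me that the cutface has length at least 15, so this one face alone contributes at least $15-3 = 12$ to the face excess. Since every face has length at least 3, this yields the unconditional bound $f^+ \geq 12$ without any further case analysis.

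Next, I would apply Lemma \ref{lem-euler}. Its minimum-degree hypothesis is automatic, since $G$ being $c$-connected forces minimum degree at least $c \geq 8 \geq 6$. The lemma then gives
$$g \;\geq\; \gamma(K_{c+1}) + \left\lfloor \frac{(c-6)v^+}{12} + \frac{f^+}{6}\right\rfloor.$$
Now $v^+ \geq 0$ by definition, and $c \geq 6$ makes $(c-6)v^+/12$ nonnegative, while $f^+/6 \geq 12/6 = 2$. Hence the quantity inside the floor is at least $2$, so $g \geq \gamma(K_{c+1}) + 2$. Because this bound holds for every dual-separable embedding of a $c$-connected graph, we conclude $\delta_1(c) \geq \gamma(K_{c+1}) + 2$, as desired.

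The real content is bundled inside Theorem 2, which I am treating as a black box, so I do not foresee any genuine obstacle in executing this plan. The only place that might warrant a sentence of comment is the boundary case $v^+=0$, where $G$ must be $K_{c+1}$ itself and the $(c-6)v^+/12$ contribution vanishes; but even then the $f^+ \geq 12$ bound alone is enough to push the floor up to $2$, so no appeal to stronger rigidity results (such as the Plummer--Zha corollary cited above) is required.
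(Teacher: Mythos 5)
Your proposal is correct and follows exactly the paper's proof: the cutface of length at least 15 gives $f^+ \geq 12$, and substituting this into the right-hand side of Lemma \ref{lem-euler} (with $v^+ \geq 0$ and $c \geq 6$) yields the bound. Your extra remark about the $v^+ = 0$ boundary case is fine but unnecessary, as you note.
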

\begin{proof}
The face excess is at least 12, so substituting this into the right-hand side of Lemma \ref{lem-euler} yields the desired result. 
\end{proof}

Our construction results in dual-separable embeddings with a cutface of length 18. By the left-hand side of Lemma \ref{lem-euler}, they can only be optimal if $c \equiv 0, 1, 4, 5, 8, 9 \pmod{12}$. We focus on the case where $c \equiv 4 \pmod{12}$. 

\section{Current graphs}

A \emph{current graph} consists of an embedding $\phi\colon G \to S$ in an orientable surface and an arc-labeling $\alpha\colon E(G)^+ \to \Gamma$, where $\Gamma$ is an abelian group and the labeling satisfies $\alpha(e^+) = -\alpha(e^-)$ for every edge $e \in E(G)$. The labels are called \emph{currents} and $\Gamma$ is called the \emph{current group}. The \emph{excess} of a vertex is the sum of the currents on the arcs directed towards the vertex, and if the excess is 0, we say that the vertex satisfies \emph{Kirchhoff's current law}. Each face boundary walk is called a \emph{circuit}, and it consists of a cyclic sequence of arcs $(e^\pm_1, e^\pm_2, e^\pm_3, \dotsc)$ with a consistent orientation. 

We focus our attention to \emph{index 3} current graphs, those where the embeddings have exactly three faces. Following Section 9 of Ringel~\cite{Ringel-MapColor}, the index 3 current graphs we consider have current group $\mathbb{Z}_{12s+3}$ and satisfy the following standard properties:

\begin{itemize}
\item Each vertex has degree 3.
\item The embedding has exactly three circuits labeled $[0]$, $[1]$, and $[2]$.
\item The log of each circuit contains each element of $\mathbb{Z}_{12s+3}\setminus\{0\}$ exactly once.
\item Unlabeled vertices satisfy Kirchhoff's current law.
\item For each labeled vertex, each circuit is incident with the vertex, and the excess of the vertex generates the index 3 subgroup of $\mathbb{Z}_{12s+3}$.
\item For each edge, if circuit $[a]$ traverses $e^+$ and circuit $[b]$ traverses $e^-$, then $\alpha(e^+) \equiv b-a \pmod{3}$. 
\end{itemize}

The derived embedding of the current graph is generated in the following way. The \emph{log} of a circuit is another cyclic sequence that replaces each arc $e^\pm_i$ with its label $\alpha(e^\pm_i)$. The derived graph initially has the vertex set $\mathbb{Z}_{12s{+}3}$, where the rotation at each vertex $j \in \mathbb{Z}_{12s{+}3}$ is found by taking the log of circuit $[j \bmod{3}]$ and adding $j$ to each element. Each labeled vertex induces a Hamiltonian $(12s+3)$-sided face. We subdivide each such face with a new vertex and connect it to every vertex incident with that face to obtain a triangular embedding of $K_{12s+3+\ell}-E(K_\ell)$, where $\ell$ is the number of labeled vertices.

\section{An optimal construction}

Jungerman and Ringel~\cite{JungermanRingel-Minimal} described a substructure in a triangular embedding called a \emph{subtractible handle}, a set of six edges whose removal from a rotation system decreases the genus by 1 and keeps the embedding triangular. One can leverage a subtractible handle to manufacture a cutvertex in the dual graph:

\begin{proof}[Proof of Theorem \ref{thm-main}]
For $s = 1$, refer to the embedding of the complete graph $K_{17}$ in Appendix \ref{app-small}. For $s \geq 2$, we show that $K_{12s+5}$ has an optimal dual-separable embedding by repurposing a family of current graphs in Sun~\cite{Sun-Minimum}, shown in Figure \ref{fig-case5}. It is derived from the family presented in Chapter 9.2 of Ringel~\cite{Ringel-MapColor} by swapping the ``rungs'' containing the currents labeled $6$ and $12s-3$. 

The current graphs generate triangular embeddings of $K_{12s+5}-E(K_2)$ in the surface of genus $\gamma(K_{12s+5})-1$. By examining the part of the current graphs near the two labeled vertices, we find that each rotation system is of the form:
    
$$\begin{array}{rrrccccll}
0. & \dotsc & 12s{-}4 & 5 & 9 & 4 & 6 & 2 & \dotsc \\
3. & \dotsc & 7 & 9 & 5 & 6 & 4 & y & \dotsc \\
4. & \dotsc & y & 3 & 6 & 0 & 9 & 16 & \dotsc \\
5. & \dotsc & x & 6 & 3 & 9 & 0 & 12s{-}4 & \dotsc \\
6. & \dotsc & 2 & 0 & 4 & 3 & 5 & x & \dotsc \\
9. & \dotsc & 16 & 4 & 0 & 5 & 3 & 7 & \dotsc \\
\end{array}$$

\begin{figure}[h]
\centering
\includegraphics[scale=0.95]{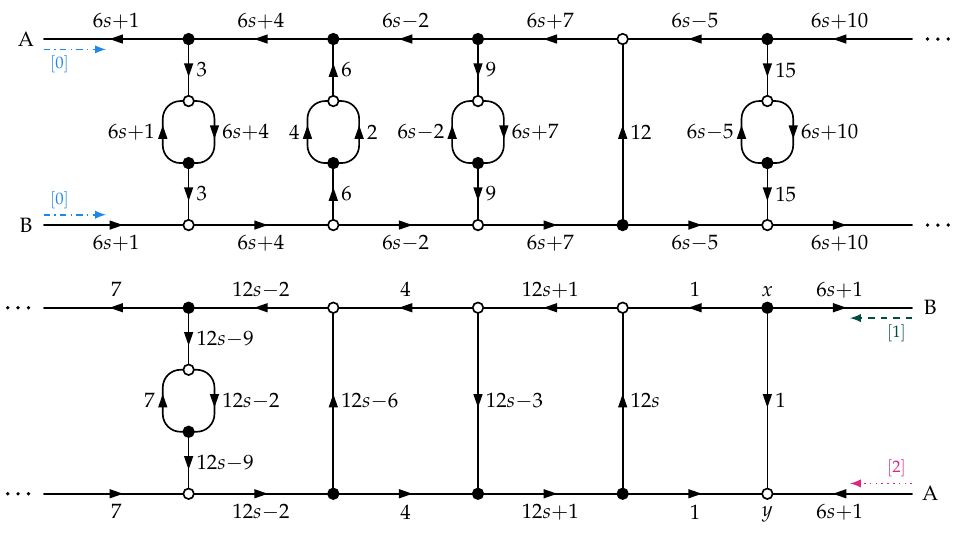}
\caption{A family of index 3 current graphs with current group $\Z_{12s+3}$, $s \geq 2$.}
\label{fig-case5}
\end{figure}

The middle four columns form the so-called subtractible handle. Deleting the edges corresponding to the middle two columns of the above table, shown visually in Figure \ref{fig-mod}(a), decreases the genus of the embedding by 1 and reveals two new triangular faces $[0,6,5]$ and $[3,4,9]$. We ``reverse'' the triangles, as in Figure \ref{fig-mod}(b): in the rotation of each vertex incident with one of the two triangles, we transpose the other two vertices of the triangle:

$$\begin{array}{rrrccll}
0. & \dotsc & 12s{-}4 & 6 & 5 & 2 & \dotsc \\
3. & \dotsc & 7 & 4 & 9 & y & \dotsc \\
4. & \dotsc & y & 9 & 3 & 16 & \dotsc \\
5. & \dotsc & x & 0 & 6 & 12s{-}4 & \dotsc \\
6. & \dotsc & 2 & 5 & 0 & x & \dotsc \\
9. & \dotsc & 16 & 3 & 4 & 7 & \dotsc \\
\end{array}$$

\begin{figure}[h]
\centering
\includegraphics[scale=1]{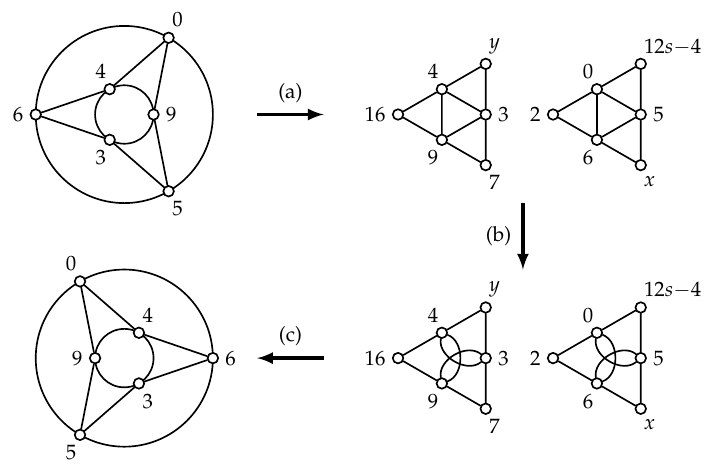}
\caption{Reversing a subtractible handle.}
\label{fig-mod}
\end{figure}

The three triangular faces neighboring a reversed triangle become merged together into a $9$-gon. The total number of faces decreases by 4, so the genus increases by 2. If we add back the edges of the subtractible handle (in the opposite orientation) like in Figure \ref{fig-mod}(c), the two 9-gons form a cut of size 2 in the dual graph, separating the six faces of the subtractible handle from the rest of the faces. Finally, we connect the two $9$-gons with a handle and add three edges between the two $9$-gons to avoid introducing any self-loops or parallel edges in the dual graph. The missing edge $(x,y)$ is chosen to be one of those three edges, as seen in Figure \ref{fig-handle}. The other two edges must already be in the graph, so we delete them from their original locations. 

\begin{figure}[h]
\centering
\includegraphics[scale=1]{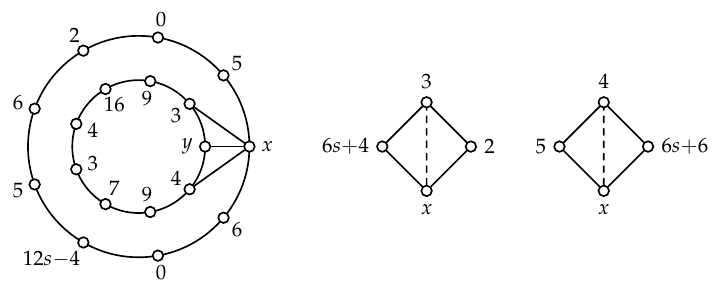}
\caption{Joining two faces into one large face.}
\label{fig-handle}
\end{figure}

The resulting $18$-gon is a cutvertex in the dual, so it remains to check that the dual is simple. Since the graph has minimum degree at least 3, faces of length 3 or 4 cannot have self-incidences or share two edges with one another. Thus, we only need to verify that the $18$-gon is not incident with itself or the same face twice. It is incident with each face belonging to the subtractible handle once, and the other incidences are with the faces neighboring the ones shown in Figure \ref{fig-mod}(b). Since the minimum degree is at least 4, each of the triangular neighboring faces are distinct (e.g., $[4, y, 16]$ cannot be a face). Finally, one can directly check that there is exactly once incidence with the quadrangular face $[4, 5, x, 6s+6]$,

In total, the genus increased by 3, so the final embedding has genus $\gamma(K_{12s+5})+2$.
\end{proof}

\section{Conclusion}

We exhibited optimal dual-separable embeddings of complete graphs $K_{c+1}$ for small values of $c$ and the infinite family $c = 12s+4$ for all $s \geq 2$. However, we were unable to resolve the case where $c = 11$. By Lemma \ref{lem-euler}, for $c = 11, 14, 15$, $K_{c+1}$ is the only $c$-connected graph that could possibly have an optimal dual-separable embedding, and furthermore, any such embedding must have all triangular faces except exactly one 15-sided face. We were successful in finding such an embedding for $c = 14,15$, but not $c = 11$ (though the author is unsure if the search was exhaustive). Additionally, Brinkmann et al.~\cite{BNV} found optimal embeddings for $c = 8,9$. These results lead us to believe the following:

\begin{conjecture}
For $c \geq 8$, $\delta_1(c) = \gamma(K_{c+1})+2$, except when $c = 11$. 
\end{conjecture}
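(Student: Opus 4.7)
The plan is to split the conjecture into a constructive side (exhibiting optimal dual-separable embeddings of $K_{c+1}$ for every $c \geq 8$ with $c \neq 11$) and a lower-bound side (strengthening the known $\delta_1(11) \geq \gamma(K_{12})+2$ to $\delta_1(11) \geq \gamma(K_{12})+3$). The inequality $\delta_1(c) \geq \gamma(K_{c+1})+2$ for $c \geq 8$ is already in hand from \cite{BNV}, so for the generic residues only the constructive side is open.

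For the construction, I would proceed residue-by-residue modulo $12$, because Lemma~\ref{lem-euler} together with the length-$15$ lower bound on the cutface dictates which cutface lengths can possibly attain optimal genus. In the residues $0, 1, 4, 5, 8, 9 \pmod{12}$ a cutface of length $18$ is feasible, and the subtractible-handle technique of Theorem~\ref{thm-main} should adapt: what is needed is, for each residue, a Ringel-style current-graph family whose subtractible handle has an interior pattern analogous to Figure~\ref{fig-case5}, after which the reverse-triangles and reattach-handle steps go through with only cosmetic changes. For residues $2, 3, 6, 7, 10 \pmod{12}$ the cutface length is forced to be $15$ (in a few subcases at most $17$), which rules out reversing a full six-edge subtractible handle and instead calls for a smaller reversible substructure---roughly, merging three triangles into a $9$-gon and extending it across one further face---that produces a $15$-gon directly. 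The dual-simplicity check must be redone for each template, but this is routine once the combinatorial setup is in place.

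The main obstacle is the exceptional case $c = 11$. By Lemma~\ref{lem-euler}, any optimal dual-separable embedding would be an embedding of $K_{12}$ into the genus-$8$ surface with exactly one $15$-face and thirty-nine triangular faces, and the conjecture asserts that no such embedding exists. My plan is a symmetry-pruned exhaustive search: fix the cyclic boundary of the $15$-face up to rotation, invoke the transitivity of $\mathrm{Aut}(K_{12})$ to cut down the starting configurations, then propagate the triangularity constraint through the remaining rotation slots, rejecting any extension whose dual fails to be simple or fails to be separable. The critical task is making this enumeration provably exhaustive---the author reports an inconclusive search---or, preferably, replacing it with a structural obstruction, such as a parity argument counting how the cut edges of the dual meet the $15$-face, that precludes separability without a case analysis. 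This last step is where I expect the genuine difficulty to lie.
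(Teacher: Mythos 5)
The statement you are trying to prove is a \emph{conjecture}: the paper contains no proof of it and explicitly leaves it open, so there is nothing to compare your attempt against except the partial evidence the paper assembles. What is actually established is the lower bound $\delta_1(c) \geq \gamma(K_{c+1})+2$ for $c \geq 8$ (imported from \cite{BNV}), the matching construction for the single residue class $c = 12s+4$ (Theorem \ref{thm-main}), the sporadic cases $c \in \{10,12,13,14,15\}$ by computer search in Appendix \ref{app-small}, and $c=8,9$ from \cite{BNV}. Your proposal correctly identifies the two missing ingredients, but it supplies neither, so it is a research program rather than a proof.

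Concretely: (1) On the constructive side, the subtractible-handle technique produces a cutface of length 18, which by Lemma \ref{lem-euler} can only be optimal for $c \equiv 0,1,4,5,8,9 \pmod{12}$; even there, one must exhibit for each residue a current-graph family whose derived embedding contains the required local pattern and then re-verify dual simplicity, and the paper manages this for exactly one residue. For the remaining residues the paper states that no approaches are known, and your ``smaller reversible substructure'' yielding a 15-gon is an unsubstantiated idea --- you give no reason such a substructure exists in any genus embedding of $K_{c+1}$, let alone one that keeps the dual simple and the resulting face a cutface. (2) The exceptional case $c=11$ demands a nonexistence proof (no genus-8 embedding of $K_{12}$ with one 15-gonal cutface and 39 triangles has a simple, separable dual); the paper reports only an inconclusive search, and your plan explicitly defers this step, which you yourself flag as the genuine difficulty. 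Until both gaps are closed, the statement remains a conjecture.
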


In the construction in Theorem \ref{thm-main}, three additional edges are needed to merge the two 9-sided faces together while keeping the embedding cellular and the dual graph simple, and one of those edges was used to connect two nonadjacent vertices. In principle, one could apply this construction to other complete graphs $K_{c+1}$, where deleting up to three edges can cause the genus to decrease by 1. As mentioned earlier, this is feasible for $n \equiv 0, 1, 4, 5, 8, 9 \pmod{12}$, but no approaches are known for the other residues. 

Instead of starting with an embedding of a near-complete graph, one could also apply the construction directly to embeddings of $K_n$. For sufficiently large $n$, there are genus embeddings of $K_n$ with subtractible handles: the necessary current graphs (possibly with a similar ``rung-swapping'' modification) for most residues are already presented in Ringel~\cite{Ringel-MapColor}, while the $n \equiv 0, 8 \pmod{12}$ cases require the newer families of current graphs in Sun~\cite{Sun-K12s, Sun-FaceDist}. Since the construction in the proof of Theorem \ref{thm-main} increases the genus by 3, this would equal the nearly optimal upper bound of $\gamma(K_n)+3$ given by Brinkmann et al.~\cite{BNV} for $n \equiv 9 \pmod{12}$. One would need to tediously check that all of the resulting embeddings have simple duals. 

\bibliographystyle{alpha}
\bibliography{biblio}

\appendix

\section{Some small embeddings}\label{app-small}

Using computer search, we found optimal dual-separable embeddings of the complete graphs on 11, 13, 14, 15, 16, and 17 vertices. In each case, we highlight the corners of the cutface and note that they always surround the same set of six faces involving the vertices $0, \dotsc, 5$. In particular, the cutface separates those six faces from all of the other faces.

$K_{11}$ with a cutface of length 17:

$$\SM\begin{array}{rccccccccccccccccccccccccc}
0. & \RL{6} & \RL{1} & 3 & 5 & \RL{2} & \RL{9} & 8 & 10 & 4 & 7 \\
1. & \RL{6} & \RL{2} & 4 & 3 & \RL{0} & \RL{5} & 10 & 7 & 8 & 9 \\
2. & \RL{8} & \RL{0} & 5 & 4 & \RL{1} & \RL{7} & 10 & 6 & 9 & 3 \\
3. & \RL{7} & \RL{5} & 0 & 1 & \RL{4} & \RL{6} & 10 & 8 & 2 & 9 \\
4. & \RL{9} & \RL{3} & 1 & 2 & \RL{5} & \RL{6} & 8 & 7 & 0 & 10  \\
5. & \RL{1} & \RL{4} & 2 & 0 & \RL{3} & \RL{8} & 6 & 7 & 9 & 10 \\
6. & \RL{3} & \RL{0} & 7 & 5 & 8 & \RL{4} & \RL{1} & 9 & 2 & 10 \\
7. & \RL{2} & \RL{3} & 9 & 5 & 6 & 0 & 4 & 8 & 1 & 10 \\
8. & \RL{2} & \RL{3} & 10 & 0 & 9 & 1 & 7 & 4 & 6 & 5 \\
9. & \RL{0} & \RL{4} & 10 & 5 & 7 & 3 & 2 & 6 & 1 & 8 \\
10. & 0 & 8 & 3 & 6 & 2 & 7 & 1 & 5 & 9 & 4 \\
\end{array}$$

$K_{13}$ with a cutface of length 15 and three quadrangular faces:

$$\SM\begin{array}{rccccccccccccccccccccccccc}
0. & \RL{4} & \RL{1} & 3 & 5 & \RL{2} & \RL{8} & 7 & 10 & 6 & 9 & 11 & 12 \\
1. & \RL{5} & \RL{2} & 4 & 3 & \RL{0} & \RL{6} & 7 & 12 & 11 & 8 & 9 & 10 \\
2. & \RL{3} & \RL{0} & 5 & 4 & \RL{1} & \RL{7} & 8 & 12 & 10 & 9 & 6 & 11 \\
3. & \RL{6} & \RL{5} & 0 & 1 & \RL{4} & \RL{2} & 11 & 9 & 7 & 8 & 10 & 12 \\
4. & \RL{7} & \RL{3} & 1 & 2 & \RL{5} & \RL{0} & 12 & 8 & 9 & 6 & 10 & 11 \\
5. & \RL{8} & \RL{4} & 2 & 0 & \RL{3} & \RL{1} & 10 & 7 & 11 & 6 & 12 & 9 \\
6. & \RL{1} & \RL{3} & 12 & 5 & 11 & 2 & 0 & 10 & 4 & 9 & 8 & 7 \\
7. & \RL{2} & \RL{4} & 11 & 5 & 10 & 0 & 3 & 9 & 12 & 1 & 6 & 8 \\
8. & \RL{0} & \RL{5} & 4 & 12 & 2 & 7 & 6 & 9 & 1 & 11 & 10 & 3 \\
9. & 0 & 2 & 10 & 1 & 8 & 6 & 4 & 5 & 12 & 7 & 3 & 11 \\
10. & 0 & 7 & 5 & 1 & 9 & 2 & 12 & 3 & 8 & 11 & 4 & 6 \\
11. & 0 & 9 & 3 & 2 & 6 & 5 & 7 & 4 & 10 & 8 & 1 & 12 \\
12. & 0 & 11 & 1 & 7 & 9 & 5 & 6 & 3 & 10 & 2 & 8 & 4 \\
\end{array}$$

$K_{14}$ with a cutface of length $17$ and a hexagonal face:

$$\SM\begin{array}{rccccccccccccccccccccccccc}
0. & \RL{6} & \RL{1} & 3 & 5 & \RL{2} & \RL{4} & 7 & 13 & 12 & 11 & 9 & 10 & 8 \\
1. & \RL{6} & \RL{2} & 4 & 3 & \RL{0} & \RL{5} & 13 & 8 & 7 & 9 & 11 & 10 & 12 \\
2. & \RL{7} & \RL{0} & 5 & 4 & \RL{1} & \RL{3} & 11 & 13 & 6 & 8 & 12 & 9 & 10 \\
3. & \RL{2} & \RL{5} & 0 & 1 & \RL{4} & \RL{8} & 13 & 9 & 7 & 12 & 10 & 6 & 11 \\
4. & \RL{0} & \RL{3} & 1 & 2 & \RL{5} & \RL{6} & 10 & 13 & 11 & 8 & 9 & 12 & 7 \\
5. & \RL{1} & \RL{4} & 2 & 0 & \RL{3} & \RL{7} & 8 & 10 & 11 & 9 & 6 & 12 & 13 \\
6. & \RL{4} & \RL{1} & 12 & 5 & \RL{9} & \RL{0} & 8 & 2 & 13 & 7 & 11 & 3 & 10 \\
7. & \RL{5} & \RL{2} & 10 & 11 & 6 & 13 & 0 & 4 & 12 & 3 & 9 & 1 & 8 \\
8. & \RL{3} & \RL{9} & 4 & 11 & 12 & 2 & 6 & 0 & 10 & 5 & 7 & 1 & 13 \\
9. & \RL{8} & \RL{6} & 5 & 11 & 1 & 7 & 3 & 13 & 10 & 0 & 2 & 12 & 4 \\
10. & 0 & 9 & 13 & 4 & 6 & 3 & 12 & 1 & 11 & 7 & 2 & 5 & 8 \\
11. & 0 & 12 & 8 & 4 & 13 & 2 & 3 & 6 & 7 & 10 & 1 & 9 & 5 \\
12. & 0 & 13 & 5 & 6 & 1 & 10 & 3 & 7 & 4 & 9 & 2 & 8 & 11 \\
13. & 0 & 7 & 6 & 2 & 11 & 4 & 10 & 9 & 3 & 8 & 1 & 5 & 12
\end{array}$$

$K_{15}$ with a cutface of length $15$:

$$\SM\begin{array}{rccccccccccccccccccccccccc}
0. & \RL{8} & \RL{1} & 3 & 5 & \RL{2} & \RL{4} & 10 & 13 & 12 & 9 & 6 & 11 & 14 & 7 \\
1. & \RL{6} & \RL{2} & 4 & 3 & \RL{0} & \RL{5} & 13 & 7 & 9 & 14 & 8 & 12 & 10 & 11 \\
2. & \RL{7} & \RL{0} & 5 & 4 & \RL{1} & \RL{3} & 6 & 10 & 12 & 14 & 9 & 8 & 13 & 11 \\
3. & \RL{2} & \RL{5} & 0 & 1 & \RL{4} & \RL{8} & 9 & 11 & 13 & 10 & 14 & 12 & 7 & 6 \\
4. & \RL{0} & \RL{3} & 1 & 2 & \RL{5} & \RL{6} & 14 & 11 & 8 & 7 & 12 & 13 & 9 & 10 \\
5. & \RL{1} & \RL{4} & 2 & 0 & \RL{3} & \RL{7} & 14 & 10 & 8 & 11 & 12 & 6 & 9 & 13 \\
6. & \RL{4} & \RL{1} & 11 & 0 & 9 & 5 & 12 & 8 & 10 & 2 & 3 & 7 & 13 & 14 \\
7. & \RL{5} & \RL{2} & 11 & 10 & 9 & 1 & 13 & 6 & 3 & 12 & 4 & 8 & 0 & 14 \\
8. & \RL{3} & \RL{0} & 7 & 4 & 11 & 5 & 10 & 6 & 12 & 1 & 14 & 13 & 2 & 9 \\
9. & 0 & 12 & 11 & 3 & 8 & 2 & 14 & 1 & 7 & 10 & 4 & 13 & 5 & 6 \\
10. & 0 & 4 & 9 & 7 & 11 & 1 & 12 & 2 & 6 & 8 & 5 & 14 & 3 & 13 \\
11. & 0 & 6 & 1 & 10 & 7 & 2 & 13 & 3 & 9 & 12 & 5 & 8 & 4 & 14 \\
12. & 0 & 13 & 4 & 7 & 3 & 14 & 2 & 10 & 1 & 8 & 6 & 5 & 11 & 9 \\
13. & 0 & 10 & 3 & 11 & 2 & 8 & 14 & 6 & 7 & 1 & 5 & 9 & 4 & 12 \\
14. & 0 & 11 & 4 & 6 & 13 & 8 & 1 & 9 & 2 & 12 & 3 & 10 & 5 & 7
\end{array}$$

$K_{16}$ with a cutface of length $15$:

$$\SM\begin{array}{rccccccccccccccccccccccccc}
0. & \RL{8} & \RL{1} & 3 & 5 & \RL{2} & \RL{4} & 13 & 14 & 10 & 6 & 11 & 7 & 12 & 9 & 15 \\
1. & \RL{6} & \RL{2} & 4 & 3 & \RL{0} & \RL{5} & 10 & 8 & 13 & 9 & 12 & 15 & 11 & 14 & 7 \\
2. & \RL{7} & \RL{0} & 5 & 4 & \RL{1} & \RL{3} & 9 & 13 & 11 & 15 & 14 & 6 & 12 & 8 & 10 \\
3. & \RL{2} & \RL{5} & 0 & 1 & \RL{4} & \RL{8} & 14 & 11 & 12 & 6 & 13 & 10 & 15 & 7 & 9 \\
4. & \RL{0} & \RL{3} & 1 & 2 & \RL{5} & \RL{6} & 10 & 11 & 9 & 14 & 12 & 7 & 8 & 15 & 13 \\
5. & \RL{1} & \RL{4} & 2 & 0 & \RL{3} & \RL{7} & 11 & 13 & 6 & 9 & 8 & 12 & 14 & 15 & 10 \\
6. & \RL{4} & \RL{1} & 7 & 15 & 9 & 5 & 13 & 3 & 12 & 2 & 14 & 8 & 11 & 0 & 10 \\
7. & \RL{5} & \RL{2} & 10 & 9 & 3 & 15 & 6 & 1 & 14 & 13 & 8 & 4 & 12 & 0 & 11 \\
8. & \RL{3} & \RL{0} & 15 & 4 & 7 & 13 & 1 & 10 & 2 & 12 & 5 & 9 & 11 & 6 & 14 \\
9. & 0 & 12 & 1 & 13 & 2 & 3 & 7 & 10 & 14 & 4 & 11 & 8 & 5 & 6 & 15 \\
10. & 0 & 14 & 9 & 7 & 2 & 8 & 1 & 5 & 15 & 3 & 13 & 12 & 11 & 4 & 6 \\
11. & 0 & 6 & 8 & 9 & 4 & 10 & 12 & 3 & 14 & 1 & 15 & 2 & 13 & 5 & 7 \\
12. & 0 & 7 & 4 & 14 & 5 & 8 & 2 & 6 & 3 & 11 & 10 & 13 & 15 & 1 & 9 \\
13. & 0 & 4 & 15 & 12 & 10 & 3 & 6 & 5 & 11 & 2 & 9 & 1 & 8 & 7 & 14 \\
14. & 0 & 13 & 7 & 1 & 11 & 3 & 8 & 6 & 2 & 15 & 5 & 12 & 4 & 9 & 10 \\
15. & 0 & 9 & 6 & 7 & 3 & 10 & 5 & 14 & 2 & 11 & 1 & 12 & 13 & 4 & 8
\end{array}$$

$K_{17}$ with a cutface of length $17$ and a hexagonal face:

$$\SM\begin{array}{rccccccccccccccccccccccccc}
0. & \RL{6} & \RL{1} & 3 & 5 & \RL{2} & \RL{4} & 14 & 10 & 13 & 12 & 11 & 9 & 16 & 15 & 7 & 8 \\
1. & \RL{6} & \RL{2} & 4 & 3 & \RL{0} & \RL{5} & 11 & 15 & 14 & 8 & 7 & 10 & 9 & 12 & 16 & 13 \\
2. & \RL{7} & \RL{0} & 5 & 4 & \RL{1} & \RL{3} & 12 & 6 & 16 & 8 & 9 & 10 & 15 & 13 & 14 & 11 \\
3. & \RL{2} & \RL{5} & 0 & 1 & \RL{4} & \RL{8} & 11 & 9 & 6 & 14 & 13 & 16 & 7 & 15 & 10 & 12 \\
4. & \RL{0} & \RL{3} & 1 & 2 & \RL{5} & \RL{6} & 15 & 16 & 12 & 7 & 9 & 10 & 8 & 13 & 11 & 14 \\
5. & \RL{1} & \RL{4} & 2 & 0 & \RL{3} & \RL{7} & 12 & 13 & 8 & 16 & 9 & 14 & 15 & 6 & 10 & 11 \\
6. & \RL{4} & \RL{1} & 13 & 7 & 11 & 16 & 2 & 12 & 14 & 3 & \RL{9} & \RL{0} & 8 & 10 & 5 & 15 \\
7. & \RL{5} & \RL{2} & 11 & 6 & 13 & 10 & 1 & 8 & 0 & 15 & 3 & 16 & 14 & 9 & 4 & 12 \\
8. & \RL{3} & \RL{9} & 2 & 16 & 5 & 13 & 4 & 10 & 6 & 0 & 7 & 1 & 14 & 12 & 15 & 11 \\
9. & \RL{8} & \RL{6} & 3 & 11 & 13 & 15 & 12 & 1 & 10 & 4 & 7 & 14 & 5 & 16 & 0 & 2 \\
10. & 0 & 14 & 16 & 11 & 12 & 3 & 15 & 2 & 5 & 6 & 8 & 4 & 9 & 1 & 7 & 13 \\
11. & 0 & 12 & 10 & 16 & 6 & 7 & 2 & 14 & 4 & 13 & 9 & 3 & 8 & 15 & 1 & 5 \\
12. & 0 & 13 & 5 & 7 & 4 & 16 & 1 & 9 & 15 & 8 & 14 & 6 & 2 & 3 & 10 & 11 \\
13. & 0 & 10 & 7 & 6 & 1 & 16 & 3 & 14 & 2 & 15 & 9 & 11 & 4 & 8 & 5 & 12 \\
14. & 0 & 4 & 11 & 2 & 13 & 3 & 6 & 12 & 8 & 1 & 15 & 5 & 9 & 7 & 16 & 10 \\
15. & 0 & 16 & 4 & 6 & 5 & 14 & 1 & 11 & 8 & 12 & 9 & 13 & 2 & 10 & 3 & 7 \\
16. & 0 & 9 & 5 & 8 & 2 & 6 & 11 & 10 & 14 & 7 & 3 & 13 & 1 & 12 & 4 & 15
\end{array}$$

\end{document}